\newtheorem{theorem}{Theorem}[section]
\newtheorem{proposition}[theorem]{Proposition}
\newtheorem{lemma}[theorem]{Lemma}
\begin{document}
	
\title{
	Highly connected graphs have highly connected spanning bipartite subgraphs
}
	
\author{
	Raphael Yuster
	\thanks{Department of Mathematics, University of Haifa, Haifa 3498838, Israel. Email: raphael.yuster@gmail.com\;.}
}
	
\date{}
	
\maketitle
	
\setcounter{page}{1}
	
\begin{abstract}
	For integers $k,n$ with $1 \le k \le n/2$, let $f(k,n)$ be the smallest integer $t$ such that every $t$-connected $n$-vertex graph has a spanning bipartite $k$-connected subgraph.
	A conjecture of Thomassen asserts that $f(k,n)$ is upper bounded by some function of $k$.
	The best upper bound for $f(k,n)$ is by Delcourt and Ferber who proved that
	$f(k,n) \le 10^{10}k^3 \log n$. Here it is proved that $f(k,n) \le 22k^2 \log n$.
	For larger $k$, stronger bounds hold. In the linear regime, it is proved that for any $0 < c < \frac{1}{2}$
	and all sufficiently large $n$, if $k=\lfloor cn \rfloor$, then $f(k, n) \le 30\sqrt{c} n \le 30\sqrt{n(k+1)}$.
	In the polynomial regime, it is proved that for any $\frac{1}{3} \le \alpha < 1$ and all sufficiently large $n$, if $k = \lfloor n^\alpha \rfloor$, then $f(k ,n) \le 9n^{(1+\alpha)/2} \le 9\sqrt{n(k+1)}$.
\vspace*{3mm}

\noindent
{\bf AMS subject classifications:} 05C35, 05C40\\
{\bf Keywords:} connectivity; bipartite; spanning

\end{abstract}

\section{Introduction}\label{sec:intro}

All graphs and digraphs considered here are finite and simple.
For $k \ge 1$, a graph $G$ is {\em $k$-connected} if it has at least $k+1$ vertices, but does not contain a set of $k-1$ vertices whose removal disconnects $G$. Let $\kappa(G)$ be the largest $k$ such that $G$ is $k$-connected.

The study of $k$-connectivity and $\kappa(G)$ are central in graph theory with several classical results relating properties of $G$ to its $k$-connectivity or to the $k$-connectivity of a subgraph of $G$.
One such result is a theorem of Mader \cite{mader-1972} that any graph with $n$ vertices and at least $2kn$ edges has a $k$-connected subgraph. For large $n$, the constant $2$ can be replaced with $\frac{19}{12}$ as proved by Bernshteyn and Kostochka \cite{BK-2016}, improving an earlier result of the author \cite{yuster-2003}, and arriving close to Mader's conjectured optimal value of $\frac{3}{2}$ \cite{mader-1979}.
As any graph has a bipartite subgraph with at least half the number of edges, it follows that any graph with at least, say, $4kn$ edges contains a bipartite $k$-connected subgraph. But what if we require the
subgraph to be {\em spanning}? As observed by Thomassen, if $G$ is $2k-1$ {\em edge-connected} (a graph is
$t$ edge-connected if one must remove at least $t$ edges to disconnect it), then every maximum edge cut
(which, in turn, is a spanning bipartite subgraph) is $k$ edge-connected.
A longstanding conjecture of Thomassen \cite{thomassen-1989} asserts that a similar statement
(replacing $2k-1$ with any function of $k$) should hold for $k$-connectivity. Let us be more formal regarding this problem, as we are addressing it for $k$ that may depend on the number of vertices of $G$.

For integers $k,n$ with $1 \le k \le n/2$, let $f(k,n)$ be the smallest integer $t$ such that every $t$-connected $n$-vertex graph has a spanning bipartite $k$-connected subgraph.
It is clear that $f(1,n)=1$ as shown by taking any spanning tree. It is also clear that the requirement $k \le n/2$ is necessary. Indeed, $\kappa(H) \le n/2$ for any bipartite graph $H$ with $n$ vertices.
Also notice that $f(k,n) \le n-1$ as $K_n$ is $(n-1)$-connected.
It is also not difficult to prove that $f(2,n) = 3$ for all $n \ge 5$ (see Section \ref{sec:concluding} for such a proof). Thomassen's conjecture states that $f(k,n)$ is upper bounded by some function of $k$.

The presently best upper bound on $f(k,n)$ is by Delcourt and Ferber \cite{DF-2015} who 
proved that $f(k,n) \le 10^{10}k^3 \log n$ \footnote{Throughout this paper, $\log n=\log_2 n$.}. While this does not prove Thomassen's conjecture, it does show that
the dependence on $n$ is at most logarithmic. It should be noted that Delcourt and Ferber
explicitly state that their main focus is the dependence on $n$ and not the dependence on $k$.
Nevertheless, the problem is intriguing also for large $k$ that depends on $n$. For example, is it true that
if $G$ has linear connectivity, then it has a spanning bipartite subgraph of reasonable linear connectivity?
Here we give a positive answer to this question and significantly improve the dependence on $k$ and
the absolute constants in all regimes.
\begin{theorem}\label{t:main}
	\,\\
	(a) For all $1 \le k \le n/2$, $f(k,n) \le 22k^2 \log n$.\\
	(b) For all $0 \le \alpha < 1$ and sufficiently large $n$, if $k=\lfloor n^\alpha \rfloor$, then
	$f(k ,n) \le 9n^{(1+\alpha)/2} \le 9\sqrt{n(k+1)}$.\\
	(c) For all $0 < c < \frac{1}{2}$ and sufficiently large $n$, if $k = \lfloor cn \rfloor$,  then $f(k, n) \le 30\sqrt{c}n \le 30\sqrt{n(k+1)}$.
\end{theorem}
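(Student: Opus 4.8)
The natural strategy is a probabilistic one: take a random 2-coloring of $V(G)$ — each vertex independently assigned to one of the two sides — and show that with positive probability the induced bipartite subgraph $H$ (edges with endpoints of different colors) is $k$-connected. To certify $k$-connectivity one must show that for every set $S \subseteq V(G)$ with $|S| \le k-1$, the graph $H - S$ is connected. The classical approach for such ``random sparsification preserves connectivity'' statements is to control the probability that some small cut survives. Concretely, if $G$ is $t$-connected with $t \approx ck$ for the right constant $c$, then between any two ``sides'' of a would-be cut there are many edge-disjoint or internally-disjoint paths (by Menger), and each such path of length $\ell$ survives the 2-coloring as an alternating path — or more cheaply, contains at least $\lceil \ell/2 \rceil$ bichromatic edges in expectation — so the probability that all connections between two large vertex sets are destroyed decays like $2^{-\Omega(t)}$. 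A union bound over the at most $\binom{n}{\le k-1} \le n^{k-1}$ choices of $S$, times the choices of a bipartition of $V \setminus S$ into the two sides of the surviving cut (another factor $\le 2^n$), forces the bound $t \gtrsim k \log n$ in part (a), and the $\sqrt{n(k+1)}$-type bounds in (b),(c) when $k$ is polynomially or linearly large and the union bound can be tightened.

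**Key steps, in order.** First I would fix a candidate vertex cut $S$, $|S| = s \le k-1$, and a partition $V \setminus S = A \cup B$ with $A, B \ne \emptyset$; the bad event is that $H$ has no edge between $A$ and $B$ after deleting $S$, i.e. every edge of $G$ between $A$ and $B$ is monochromatic. Second, I would lower-bound the number of edges or, better, the number of internally-disjoint $A$–$B$ paths in $G - S$ using $t$-connectivity: since $|S| \le k - 1 \le t - 1$, the graph $G - S$ is $(t - k + 1)$-connected, hence (if $A,B$ are both large, say each of size $\ge t$) there are at least $t - k + 1$ internally vertex-disjoint $A$–$B$ paths. Third — and this is the crux of making the exponent come out right — I would observe that it suffices for \emph{one} of these paths to be ``broken'' by the coloring in the sense of having an alternation; a disjoint path on $\ell$ internal vertices fails to produce an $A$–$B$ edge with probability at most $2^{-\lceil \ell/2 \rceil} \le 1/\sqrt 2$ roughly, but one must be careful because long paths are less likely to fail — the worst case is length-$1$ paths (a single edge), which survive monochromatically with probability $1/2$. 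So the probability all $m := t-k+1$ disjoint paths fail is at most $2^{-m}$ in the cleanest case. Fourth, I would union bound: $\sum_S \sum_{A,B} 2^{-(t-k+1)}$, handling separately the easy case where one of $A,B$ is small (there a more direct degree argument applies, since a small side $A$ sees $\ge k \cdot |A|$-ish edges into $B \cup S$). Setting $t = C k \log n$ makes the total $< 1$, giving (a); for (b) and (c), $k$ itself is already $\ge n^{1/3}$, so $\log n = O(\log k) = O((\log k)/\alpha)$ can be absorbed, and the relevant count of cuts is dominated differently, yielding the $\sqrt{n(k+1)}$ form after optimizing the constant.

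**Main obstacle.** The delicate point is the dependence between the $\binom{n}{s}$ choices of $S$ and the exponential savings: a naive union bound over all $S$ and all bipartitions $(A,B)$ costs a factor of roughly $n^{k} 2^{n}$, which only $2^{-\Omega(k\log n + n)}$ can beat — so one needs the failure probability for a fixed $(S,A,B)$ to be genuinely $2^{-\Omega(t)}$ with $t$ comfortably exceeding both $k \log n$ and $n$ in the relevant ranges, and in the linear regime $t = \Theta(n)$ so the $2^n$ from bipartitions is exactly what must be overcome — this is why the bound degrades to $\sqrt{cn}$ rather than staying $O(\log n)$: one must \emph{trade} path-count against the number of cuts, choosing which edges to examine so that the surviving-path estimate beats $2^{n}$ with room to spare. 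Getting the constants $22$, $9$, $30$ requires a careful two-regime split (small side vs. balanced bipartition) and a sharp version of the ``an alternating path survives'' estimate, e.g. via a second-moment or Janson-type bound rather than a crude union over paths; I expect that balancing these two contributions, and in particular handling bipartitions where $A$ is of intermediate size, will be the technical heart of the argument.
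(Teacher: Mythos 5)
Your plan — take a single uniformly random $2$-coloring of $V(G)$ and union-bound over all candidate separators $S$ and all bipartitions $(A,B)$ of $V\setminus S$ — is genuinely different from the paper's approach, but the obstacle you flag in your last paragraph is not a technicality to be tuned away: it kills the argument. For a fixed $(S,A,B)$ the best failure probability you can extract from $m=t-k+1$ internally disjoint $A$--$B$ paths is $2^{-m}$, and this is tight (all paths can be single edges). Since $t \le n-1$ always, $2^{-m} \ge 2^{-(n-k)}$, while the number of bipartitions of $V\setminus S$ into $(A,B)$ is of order $2^{n-|S|}$. So the contribution from balanced bipartitions alone is at least $2^{n}\cdot 2^{-(t-k+1)} \ge 2^{k}$, and the union bound diverges for every admissible $t$. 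Even in the linear regime the theorem only allows $t=30\sqrt c\,n < n/30$, so $2^{n}2^{-t}$ is astronomically large. There is no way to ``trade path-count against the number of cuts'' within a naive union bound over vertex bipartitions, because one quantity is capped at $n$ and the other is $\Theta(2^n)$; the statement you are proving simply isn't accessible this way.

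The paper sidesteps the exponential union bound entirely. It fixes a \emph{minimum-size} partition $P=\{V_1,\dots,V_t\}$ of $V(G)$ in which every part is either a singleton or carries a spanning bipartite $k$-connected subgraph $B_i$, assigns a random proper red/blue coloring to each $B_i$ independently, and then only has to union-bound over the at most $n$ parts (to ensure each part retains a constant fraction of its cross-edges after the coloring). The structural glue is deterministic: Lemmas 2.2 and 2.3 say that two vertex-disjoint bipartite $k$-connected pieces joined by $2k-1$ independent edges (or one such piece plus a vertex with $2k-1$ neighbours in it) merge into a single bipartite $k$-connected piece, which is what forces each $S_i$ to spread across many \emph{distinct} parts; and Lemma 2.1 (from Delcourt--Ferber, with a linear-regime variant Lemma 2.4) converts the resulting high-out-degree digraph on the parts into a $k$-connected block structure. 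Putting the pieces in that block together with the surviving bichromatic cross-edges produces a strictly coarser partition, contradicting minimality. If you want to pursue a direct random-coloring route, you would need a mechanism (as in the paper's partition/digraph framework, or some vertex-exploration argument) that restricts attention to polynomially many ``essential'' cuts rather than all $2^n$ bipartitions; as written, your step three cannot be made to close.
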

\noindent Note that while the constant $c$ in Item (c) covers every possible value, it is only meaningful if
$c < \frac{1}{900}$ since we always have $f(k,n) \le n-1$.

It should be noted that Theorem \ref{t:main} does not shed more light on Thomassen's conjecture as the dependence on $n$ in Case (a) is still logarithmic as in \cite{DF-2015}. On the other hand, it scales the dependence on $k$ from at most quadratic to ever smaller polynomials as $k$ grows beyond $n^{1/3}$.
We also note that while we did not try to optimize the absolute constants, they are reasonably small.

Finally, we note that problem addressed here falls under the following general paradigm 
(see Perarnau and Reed \cite{PR-2017}). Given a graph parameter $\rho$, a value $k$, and a family of graphs
${\mathcal F}$, determine the largest value of $\ell$ such that for every
graph $G$ with $\rho(G) \ge k$ there exists an ${\mathcal F}$-free subgraph $H$ of $G$ with $\rho(H) \ge \ell$.
In our case $\rho=\kappa$ and ${\mathcal F}$  is the family of odd cycles.
Some interesting results following under this paradigm are \cite{CFS-2016,erdos-1968,FKP-2015,KO-2004,PR-2017,thomassen-1983}.

\section{Proof of Theorem \ref{t:main}}\label{sec:proof}

Our proof of Theorem \ref{t:main} is an adaptation of the method of Delcourt and Ferber, but with several changes. In particular, we rely on their lemma on subgraph connectivity of digraphs with high minimum out-degree (Lemma \ref{l:out-deg} below), but we do not rely on Mader's Theorem (the one mentioned in the introduction)
which makes the choices of our maximum counter-example (which we use for contradiction) more economical,
and resulting in meaningful bounds also when $k$ is large.

We first need the following simple and useful lemma from \cite{DF-2015}. For a directed graph $D$ and a vertex
$v \in V(D)$, let $d^+_D(v)$ denote the out-degree of $v$ in $D$ and let $U(D)$ denote the underlying
graph of $D$ which is the undirected graph obtained by ignoring the directions of the edges (each cycle of length $2$, which is possible in $D$, corresponds to a single edge in $U(D)$). 
\begin{lemma}[\cite{DF-2015}]\label{l:out-deg}
	If $D$ is a digraph on at most $n$ vertices and with minimum out-degree
	larger than $(k-1)\log n$, then it contains a subdigraph $D'$ with $\kappa(U(D')) \ge k$
	and where $d^+_{D'}(v)  \ge d^+_{D}(v)-(k-1)\log n$ for all $v \in V(D')$. \qed
\end{lemma}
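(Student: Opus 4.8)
The plan is to prove, by strong induction on $t$, the following sharper statement: \emph{every digraph $D$ on exactly $t$ vertices with minimum out-degree larger than $(k-1)\log t$ contains a subdigraph $D'$ with $\kappa(U(D')) \ge k$ and $d^+_{D'}(v) \ge d^+_D(v) - (k-1)\log t$ for all $v \in V(D')$.} This implies the lemma at once: if $D$ has $t \le n$ vertices and minimum out-degree larger than $(k-1)\log n \ge (k-1)\log t$, the statement applies, and then $d^+_{D'}(v) \ge d^+_D(v)-(k-1)\log t \ge d^+_D(v)-(k-1)\log n$ for all $v \in V(D')$.

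For the induction step, take such a $D$ on $t$ vertices. If $\kappa(U(D)) \ge k$ we are done with $D' = D$, so suppose not. I would first rule out $t \le k$: in that case every out-degree is at most $t-1 \le k-1 \le (k-1)\log t$ (using $t \ge 2$, which holds since the minimum out-degree is positive), contradicting the hypothesis. Hence $t \ge k+1$, and the only remaining way $U(D)$ can fail to be $k$-connected is that it has a vertex cut $S$ with $|S| \le k-1$. Write $V(D)\setminus S = A \sqcup B$ with $A,B \ne \emptyset$ and $|A| \le |B|$ (so $|A| \le t/2$), chosen with no edge of $U(D)$ — hence no arc of $D$ — between $A$ and $B$. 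Next I would note $|A| \ge 2$: if $A=\{v\}$ then every out-neighbour of $v$ lies in $S$, giving $d^+_D(v) \le |S| \le k-1 \le (k-1)\log t$, again impossible.

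Now set $D_A := D[A]$, the subdigraph of $D$ induced on $A$. Because no arc leaves $A$ towards $B$, each $v\in A$ satisfies $d^+_{D_A}(v) \ge d^+_D(v) - |S| \ge d^+_D(v) - (k-1) > (k-1)(\log t - 1) \ge (k-1)\log|A|$, the last step using $|A| \le t/2$. Since $2 \le |A| < t$, the induction hypothesis applied to $D_A$ produces a subdigraph $D'$ of $D_A$ (hence of $D$) with $\kappa(U(D')) \ge k$ such that, for every $v \in V(D')$,
\[
d^+_{D'}(v) \;\ge\; d^+_{D_A}(v) - (k-1)\log|A| \;\ge\; \bigl(d^+_D(v)-(k-1)\bigr) - (k-1)(\log t - 1) \;=\; d^+_D(v)-(k-1)\log t ,
\]
which closes the induction. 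The one point requiring care — and the reason the loss is exactly $(k-1)\log n$ rather than more — is that the smaller side $A$ of any cut has $|A| \le t/2$, so $\log|A| \le \log t - 1$; this spare ``$-1$'' is precisely what absorbs the up-to-$(k-1)$ arcs from a vertex of $A$ into $S$ that are discarded when passing to $D_A$. The degenerate cases ($t \le k$, $|A|=1$, and digraphs on a single vertex) are each eliminated directly by the out-degree hypothesis, as above, so no separate base case is needed.
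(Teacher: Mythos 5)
Your proof is correct: the induction is sound, the degenerate cases ($t\le k$, $|A|=1$, $t=1$) are genuinely excluded by the out-degree hypothesis, and the key accounting step $\log|A|\le\log t-1$ correctly absorbs the at most $k-1$ arcs lost into the cut. The paper itself cites this lemma from Delcourt--Ferber without proof, but your argument is essentially the same repeated ``cut off the smaller side of a small separator'' idea that the paper uses for its variant Lemma~\ref{l:out-deg-linear}, merely phrased as strong induction rather than as an iterative halving process.
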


\begin{lemma}\label{l:union}
	Let $G_1$ and $G_2$ be two vertex-disjoint bipartite $k$-connected subgraphs of a graph $G$. If there are at least $2k-1$ pairwise vertex-disjoint edges of $G$ each having an endpoint in $G_1$ and an endpoint in $G_2$,
	then one can pick $k$ of these edges such that the union of $G_1$ and $G_2$, together with the $k$ picked edges, forms a bipartite $k$-connected subgraph of $G$.
\end{lemma}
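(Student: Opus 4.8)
The plan is to separate the two requirements: I would first use the freedom in choosing the $k$ edges to guarantee \emph{bipartiteness}, and then observe that \emph{$k$-connectivity} follows automatically, essentially for free, from a disjoint-edges counting argument. Throughout, note that since $k\ge 1$, each $G_i$ is connected, hence has a unique bipartition; write $A_1,B_1$ for the parts of $G_1$ and $A_2,B_2$ for the parts of $G_2$.

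For bipartiteness, observe that there are exactly two ways to try to properly $2$-colour $V(G_1)\cup V(G_2)$ consistently with the bipartitions of both $G_1$ and $G_2$: the parts $(A_1\cup A_2,\ B_1\cup B_2)$ or the parts $(A_1\cup B_2,\ B_1\cup A_2)$. Under either choice every edge of $G_1$ and of $G_2$ is properly coloured, so the only edges that can be monochromatic are the crossing edges. A crossing edge joining $A_1$ to $A_2$, or $B_1$ to $B_2$, is properly coloured under the second choice but not the first; a crossing edge joining $A_1$ to $B_2$, or $B_1$ to $A_2$, is properly coloured under the first choice but not the second. Hence each of the $2k-1$ given pairwise vertex-disjoint crossing edges is ``good'' for exactly one of the two choices, so by pigeonhole at least $\lceil (2k-1)/2\rceil=k$ of them are good for the same choice. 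I would fix that choice of bipartition, let $e_1,\dots,e_k$ be $k$ crossing edges good for it, and set $H:=G_1\cup G_2\cup\{e_1,\dots,e_k\}$; then $H$ is bipartite.

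For connectivity, first $|V(H)|=|V(G_1)|+|V(G_2)|\ge 2(k+1)>k$, so $H$ has enough vertices. Now let $S\subseteq V(H)$ with $|S|\le k-1$; it suffices to show $H-S$ is connected. Since $G_i$ is $k$-connected and $|S\cap V(G_i)|\le k-1<|V(G_i)|$, the graph $G_i-S$ is connected and nonempty for $i=1,2$. The edges $e_1,\dots,e_k$ are pairwise vertex-disjoint, so they use $2k$ distinct vertices and each vertex of $S$ lies in at most one of them; thus at most $k-1$ of the $e_i$ meet $S$, and at least one edge $e_j$ survives in $H-S$. That edge $e_j$ joins the single component of $G_1-S$ to the single component of $G_2-S$, and since $V(H-S)=(V(G_1)\setminus S)\cup(V(G_2)\setminus S)$, the graph $H-S$ is connected. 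Therefore $H$ is a bipartite $k$-connected subgraph of $G$, as required.

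I expect the only delicate point to be the bipartiteness bookkeeping: one must check that the global bipartition of $H$ is determined by the (unique) bipartitions of $G_1$ and $G_2$ together with the side-pattern of the chosen crossing edges, and that $2k-1$ pairwise vertex-disjoint crossing edges is exactly enough to pigeonhole out $k$ of them compatible with one global bipartition. The connectivity half is a routine argument once $G_1-S$ and $G_2-S$ are known to be connected, the key fact being that $k$ pairwise vertex-disjoint edges cannot all be destroyed by deleting only $k-1$ vertices.
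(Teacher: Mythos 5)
Your proof is correct and takes essentially the same approach as the paper: partition the $2k-1$ crossing edges according to which of the two candidate global bipartitions they are compatible with, use pigeonhole to find $k$ compatible with a common choice, and then note that deleting $k-1$ vertices cannot destroy all $k$ pairwise vertex-disjoint crossing edges while each $G_i$ minus those vertices remains connected. The only cosmetic difference is that you phrase the pigeonhole in terms of ``good for choice 1 or choice 2'' whereas the paper splits $F$ into the four sets $F_{1,3},F_{1,4},F_{2,3},F_{2,4}$ and compares $|F_{1,3}\cup F_{2,4}|$ with $|F_{1,4}\cup F_{2,3}|$; these are the same count.
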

\begin{proof}
	Let $(A_1,A_2)$ be a proper bipartition of $V(G_1)$ and let $(A_3,A_4)$ be a proper bipartition of
	$V(G_2)$. Let $F$ be $2k-1$ pairwise vertex-disjoint edges, each with an endpoint in $G_1$ and  an endpoint in $G_2$.
	Let $F_{i,j}$ be the subset of $F$ consisting of the edges with one endpoint in $A_i$ and the other in $A_j$. Note that $F$ is the disjoint union of $F_{1,3},F_{1,4},F_{2,3},F_{2,4}$. So either $|F_{1,3} \cup F_{2,4}| \ge k$ or $|F_{1,4} \cup F_{2,3}| \ge k$. In the former case, we can take $F_{1,3} \cup F_{2,4}$
	and the union of $G_1$ and $G_2$ to form a bipartite graph with proper bipartition $(A_1 \cup A_4, A_2 \cup A_3)$.
	In the latter case, we can take $F_{1,4} \cup F_{2,3}$
	and the union of $G_1$ and $G_2$ to form a bipartite graph with proper bipartition $(A_1 \cup A_3, A_2 \cup A_4)$.
	In any case, the obtained union is $k$-connected as the removal of any $k-1$ vertices keeps $G_1$ and $G_2$ connected and at least one of the connecting edges is still intact so the union remains connected as well.
\end{proof}
\begin{lemma}\label{l:union-singleton}
	Let $G_1$  be a bipartite $k$-connected subgraph of a graph $G$. If there is a vertex $v$ of $G$ outside $G_1$ with at least $2k-1$ neighbors in $G_1$,
	then one can pick $k$ of these neighbors such that the union of $G_1$ and $v$, together with the $k$ edges connecting $v$ to the picked neighbors, forms a bipartite $k$-connected subgraph of $G$. \qed
\end{lemma}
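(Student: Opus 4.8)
The plan is to run exactly the argument of Lemma~\ref{l:union}, viewing the single vertex $v$ as a degenerate "bipartite $k$-connected graph" $G_2$; since $v$ is a single vertex we are free to place it on either side of the bipartition of the union. Concretely, fix a proper bipartition $(A_1,A_2)$ of $V(G_1)$. The $\ge 2k-1$ neighbors of $v$ in $G_1$ split into those lying in $A_1$ and those lying in $A_2$, so by pigeonhole one of these two sets has size at least $k$; without loss of generality $v$ has a set $S$ of exactly $k$ neighbors with $S\subseteq A_1$ (the other case is symmetric, with the roles of $A_1$ and $A_2$ swapped).

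Now form $H$ from $G_1$ by adding $v$ together with the $k$ edges $\{vu : u\in S\}$. Then $(A_1,\,A_2\cup\{v\})$ is a proper bipartition of $H$: the edges of $G_1$ go between $A_1$ and $A_2$, and each new edge goes from $v$ to a vertex of $S\subseteq A_1$. It remains to check $\kappa(H)\ge k$. The vertex count is fine, since $H$ has $|V(G_1)|+1\ge k+2$ vertices. For the separator condition, consider any $W\subseteq V(H)$ with $|W|=k-1$. If $v\in W$, then $H-W=G_1-(W\setminus\{v\})$ is connected because $G_1$ is $k$-connected and $|W\setminus\{v\}|\le k-2$. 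If $v\notin W$, then $W\subseteq V(G_1)$, so $G_1-W$ is connected since $G_1$ is $k$-connected; moreover $S\setminus W\neq\emptyset$ because $|S|=k>|W|$, so $v$ retains a neighbor in $G_1-W$ and hence $H-W$ is connected. Thus no set of $k-1$ vertices disconnects $H$, so $H$ is a bipartite $k$-connected subgraph of $G$, as required.

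There is essentially no obstacle here: the statement is the $|V(G_2)|=1$ specialization of Lemma~\ref{l:union}, and the only way in which the argument simplifies is that the connecting edges now fall into two classes (those landing in $A_1$ and those landing in $A_2$) rather than four, so the threshold $2k-1$ still suffices by a two-way pigeonhole. The one point worth an explicit remark — that $H$ has at least $k+1$ vertices — is immediate since $G_1$ already does.
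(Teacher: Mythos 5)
Your proof is correct and follows exactly the same approach as the paper's: pigeonhole on the two sides of the bipartition of $G_1$ to find $k$ neighbors of $v$ on one side, then form the bipartition $(A_1, A_2\cup\{v\})$. The only difference is that you spell out the connectivity check that the paper dismisses as clear.
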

\begin{proof}
	Let $(A_1,A_2)$ be a proper bipartition of $V(G_1)$ where, without loss of generality, $v$ has at least $k$ neighbors in $A_1$. Taking $G_1$ together with $v$ and the $k$ edges connecting $v$ to $A_1$
	gives a bipartite graph with bipartition $(A_1,A_2 \cup \{v\})$ which is clearly $k$-connected.
\end{proof}

We begin by proving Case (a) of Theorem \ref{t:main}. We then show how to modify the proof to obtain the other cases.
\begin{proof}[Proof of Theorem \ref{t:main}, Case (a)]
	Recall that we must prove that $f(k,n) \le 22k^2 \log n$.
	We assume $k \ge 3$ (as $k=1$ is trivial and $k=2$ follows form Proposition \ref{prop:1}).
	Let $s=\lfloor 22k^2 \log n \rfloor$. We may assume that $s \le n-1$ since $f(k,n) \le n-1$.
	Let $G$ be an $s$-connected graph with $n$ vertices. We prove that $G$ has a bipartite spanning $k$-connected subgraph.
	
	Let $d=\lfloor s/4 \rfloor \ge k$.
	Consider a partition of the vertex set of $G$ into parts $V_1,\ldots,V_t$ such that each $V_i$ is either a singleton or else $|V_i| \ge d$ and $G[V_i]$ has a bipartite spanning subgraph $B_i$ that is $k$-connected
	(for convenience, when $V_i$ is a singleton, let $B_i$ denote the graph with a single vertex; it is ``bipartite'' with one part of the bipartition being empty).
	Hereafter we shall consider a partition $P=\{V_1,\ldots,V_t\}$ where $t$ is minimum and show that we must have $t=1$, concluding the proof. So, assume that $t > 1$.
	
	Suppose $V_i \in P$ is not a singleton (so $n > |V_i| \ge d$). Let $M_i$ be a maximum matching of the edge cut $(V_i,V(G) \setminus V_i)$. We must have $|M_i| \ge d$ as otherwise the $2|M_i| \le 2d-2 \le s-1$ vertices of the endpoints of $M_i$, once removed from $G$, disconnect the remaining vertices of $V_i$ from the remaining vertices of $V(G) \setminus V_i$, contradicting the $s$-connectivity of $G$.
	Let $S_i \subseteq M_i$ be such that for any $j \neq i$, there is a single edge incident with $V_j$, if such an edge exists. By Lemma \ref{l:union},
	$|S_i| \ge |M_i|/(2k-2) \ge d/(2k-2)$ as otherwise $t$ would not be minimum.
	
	Next, suppose that $V_i = \{v\} \in P$ is a singleton. We say that $V_i$ is of {\em type $\alpha$} if $v$ has at least $3d$ neighbors in $V$, where each of them is a singleton part of $P$. Otherwise, we say that $V_i$ is of {\em type $\beta$}. Now, if $V_i$ is of type $\alpha$, let $S_i$ be a set $3d$ edges connecting $v$ to $3d$ singleton parts of $P$. Suppose now that $V_i$ is of type $\beta$.
	As the degree of $v$ in $G$ is at least $s$, we have that at least $s-3d$ edges connect $v$ to non-singleton parts of $P$. By Lemma \ref{l:union-singleton} and as $t$ is minimum,
	at most $2k-2$ such edges connect $v$ to the same non-singleton part of $P$. Thus, there is a set
	$S_i$ of edges connecting $v$ to non-singleton parts of $P$ such that no two edges of $S_i$ are incident with the same part and $|S_i| \ge (s-3d)/(2k-2) \ge d/(2k-2)$.
	
	To summarize, for each part $V_i \in P$ we have chosen a set of edges $S_i$ incident to that part and connecting it to other parts, such than no other part $V_j$ has more than one vertex incident with $S_i$.
	If $V_i$ is non-singleton, then $S_i$ is a matching and if $V_i$ is a singleton, then $S_i$ is a star.
	In all cases, $t-1 \ge |S_i| \ge d/(2k-2) \ge (s-3)/(8k-8) > s/8k$ and if $S_i$ is a singleton of type $\alpha$, then $|S_i| = 3d$.
	
	Independently for each $i$, take a random red-blue proper vertex coloring of the bipartite graph $B_i$ (so either one part of the bipartition of $B_i$ is red and the other part is blue, or vice versa).
	Once making these $t$ choices, let $T_i \subseteq S_i$ be those edges whose endpoints have different colors. Observe that the union of all $B_1,\ldots,B_t$ and the edge sets $T_1,\ldots,T_t$ is a bipartite spanning subgraph of $G$. As $|T_i|$ has distribution ${\rm Bin}(|S_i|,\frac{1}{2})$ we have
	by Chernoff's inequality,
	\begin{equation}\label{e:1}
	\Pr \left[ |T_i| < \frac{|S_i|}{4}\right] = \Pr \left[ |T_i|-{\mathbb E}[|T_i|] < -\frac{|S_i|}{4}\right] < e^{-\frac{2(|S_i|/4)^2}{|S_i|}}=e^{-\frac{|S_i|}{8}} \le e^{-\frac{s}{64k}}\;.
	\end{equation}
	Since $t \le n$ and since $s > 64k \ln n$, we have from \eqref{e:1} and the union bound that with positive probability, $|T_i| \ge |S_i|/4$ for all $1 \le i \le t$.
	Hereafter, we shall assume that this is indeed the case.
	
	We construct a directed graph $D$ on vertex set $[t]$ where $(i,j)$ is an edge whenever $T_i$ is
	incident with a vertex of $V_j$. Notice that the out-degree of $i$ is precisely
	$|T_i| \ge |S_i|/4 > s/32k$. Since $s \ge 32k(k-1)\log n$, we have that $|T_i| > (k-1)\log n$
	so by Lemma \ref{l:out-deg}, $D$ has a subdigraph $D'$ on vertex set $V(D') \subseteq [t]$ such
	that $U(D')$ is $k$-connected and for each $i \in V(D')$ we have $d^+_{D'}(i) \ge |T_i| - (k-1)\log n$.
	For $i \in V(D')$, let $T_i^* \subseteq T_i$ be the set of edges in $T_i$ incident with some $V_j$ where $j \neq i$ and $j \in V(D')$. Notice that $|T_i^*| = d^+_{D'}(i) \ge |T_i| - (k-1)\log n$.
	
	Consider the bipartite graph $B$ obtained by taking the union of all $B_i$ for $i \in V(D')$ and all edge sets $T_i^*$ for $i \in V(D')$. First notice that $|V(B)| \ge d$. Indeed, either some $i \in V(D')$ is such that $V_i$
	is not a singleton (so already $|V_i| \ge d$), or else for all $i \in V(D')$ we have that $|V_i|$ is a singleton. But in the latter case, all these singletons must be of type $\alpha$ (if $V_i$ is of type $\beta$, then any edge $(i,j)$ in $D'$ is such that $V_j$ is non-singleton).
	However, recall that if $V_i$ is of type $\alpha$, then $|S_i| \ge 3d$ which means that
	$|T_i| \ge 3d/4$. So
	\begin{equation}\label{e:2}
	|T_i^*| = d^+_{D'}(i) \ge |T_i| - (k-1)\log n \ge \frac{3d}{4}-\frac{s}{32k} \ge \frac{3d}{4}-\frac{4d+4}{32k} \ge \frac{d}{2}
	\end{equation}
	and thus $B$ is a bipartite graph with minimum degree at least $d/2$ so it has at least $d$ vertices.
	
	 We will prove that $B$ is $k$-connected. Note that once we do that, we arrive at a contradiction, as the number of parts of the partition obtained by replacing all the parts of $P$ of the form $V_i$ with $i \in V(D')$ with the single element $V(B)$, is smaller than $t$.
	 
	To prove that $B$ is $k$-connected, consider some set $K$ of $k-1$ vertices of $B$. We must show that
	$B^*$, the graph remaining from $B$ after removing $K$, remains connected. Suppose $x \in V_i$ and $y \in V_j$ (possibly $i=j$) are two vertices of $B^*$. We must show that there is a path in $B^*$ connecting them. Now, if $i=j$, then $B_i[V_i \setminus K]$ is connected since $B_i$ is $k$-connected. Hence, there is
	a path between $x$ and $y$ entirely inside $B_i[V_i \setminus K]$. Assume therefore that $i \neq j$.
	Let $L=\{\ell \in V(D') ~|~ V_\ell \cap K \neq \emptyset\}$.
	As $U(D')$ is $k$-connected, removing the vertices of $L$ from $D'$ keeps $U(D'[V(D') \setminus L])$ connected. It therefore suffices to show that there is a (possibly trivial) path connecting $x$ to some vertex of $V_m$ where $m \notin L$ and there is a (possibly trivial) path connecting $y$ to some vertex
	of $V_{m^*}$ where $m^* \notin L$. As the two claims are identical, we prove it for $x$.
	Notice that the claimed path trivially exists if $i \notin L$. If, however $i \in L$ (namely, some of the vertices of $K$ are in $V_i$), then $V_i$ cannot be a singleton (as it contains both $x$ and a vertex of $K$). So, $T_i^*$ is matching of size at least $k$ (since $U(D')$ is $k$-connected, its minimum degree is at least $k$). Hence, at least one edge of $T_i^*$ is not incident with $K$ nor with $V_\ell$ where $\ell \in L$. Such an edge is of the form $uv$ where $u \in V_i \setminus K$ (possibly $u=x$) and $v \in
	V_m$ with $m \notin L$. As there is a path in $B_i[V_i \setminus K]$ between $x$ and $u$, the claim follows.
\end{proof}

\begin{proof}[Proof of Theorem \ref{t:main}, Case (b)]
	Recall that we must prove that for all $0 \le \alpha < 1$ and sufficiently large $n$,
	if $k=\lfloor n^\alpha \rfloor$, then $f(k ,n) \le 9n^{(1+\alpha)/2}$.
	The proof is very similar to Case (a). We use the same notation and outline the differences.
	Set $s=\lfloor 9n^{(1+\alpha)/2} \rfloor$ and let
	$k=\lfloor n^\alpha \rfloor$. We set $d=\lfloor s/4 \rfloor \ge k$ and use a partition
	$P=\{V_1,\ldots,V_t\}$ where $t$ is minimum, as in Case (a), aiming to prove that $t=1$. Assume, to the contrary, that $t > 1$. Let $t^*$ denote the number of non-singleton parts (i.e. those of size at least $d$) and observe that $t^* \le n/d \le 4n/(s-3) < n^{(1-\alpha)/2}/2$.
	
	Suppose $V_i  \in P$ is not a singleton and recall that we have defined $M_i$ to be a maximum matching 
	of the edge cut $(V_i,V(G) \setminus V_i)$ and that as in Case (a) we have $|M_i| \ge d$. By Lemma \ref{l:union}, at most
	$(2k-2)t^*$ edges of $M_i$ are incident with non-singleton parts, so after removing them
	from $M_i$ we obtain a subset $S_i$ with $|S_i| \ge d-(2k-2)t^* \ge 2n^{(1+\alpha)/2}-2n^\alpha n^{(1-\alpha)/2}/2 \ge n^{(1+\alpha)/2}$. Notice that this definition of $S_i$ is slightly different from
	the  one used in Case (a), as we may take advantage of the fact that there are many singleton parts.

	Consider now a singleton part $V_i$ and recall that if $V_i$ is of type $\alpha$, then $S_i$ is the set of edges of a star connecting $V_i$ only to singleton parts and $|S_i|=3d$.
	We claim that there are no singletons of type $\beta$. 
	Suppose that $V_i$ is of type $\beta$, i.e., at least $s-3d$ edges connect it
	to non-singleton parts. However, by Lemma \ref{l:union-singleton} at most $(2k-2)t^*$ edges connect it to non-singleton parts, which is impossible since $(2k-2)t^* < n^{(1+\alpha)/2} < s-3d$.
    
    We observe that as in \eqref{e:1}, we have that
    $\Pr \left[ |T_i| < |S_i|/4 \right]< e^{-\frac{|S_i|}{8}}$ and since $|S_i|/8 = \omega( \ln n)$,
    we have that $|T_i| \ge \frac{|S_i|}{4}$ holds for all $1 \le i \le t$ with positive probability,
    so we assume this is the case. Thus, we have that in the directed graph $D$, the out-degree of $i$ 
    is $|T_i| \ge |S_i|/4 \ge n^{(1+\alpha)/2}/4 > (k-1)\log n$. We therefore apply Lemma \ref{l:out-deg} to obtain $D'$ as in Case (a). The analogue of \eqref{e:2}, which, recall, is only applied when $V_i$ is of type $\alpha$, now becomes
    $$
    |T_i^*| = d^+_{D'}(i) \ge |T_i| - (k-1)\log n \ge \frac{3d}{4}-n^{\alpha}\log n \ge \frac{3d}{4}-\frac{d}{4} \ge \frac{d}{2}
    $$
    so, as in Case (a), $|V(B)| \ge d$, the new partition has fewer parts than $P$, and $B$ is $k$-connected, contradicting the minimality of $t$.
\end{proof}

Prior to proving Case (c) of Theorem \ref{t:main}, we require the following variant of Lemma \ref{l:out-deg}.
\begin{lemma}\label{l:out-deg-linear}
	Let $0 < c < \frac{1}{2}$. For all sufficiently large $n$, if $D$ is a digraph on at most $n$ vertices and with minimum out-degree at least $3c\log(1/c)n$, then it contains a subdigraph $D'$ with $\kappa(U(D')) \ge cn$
	and where $d^+_{D'}(v)  \ge d^+_D(v) - 2c\log(1/c) n$ for all $v \in V(D')$.
\end{lemma}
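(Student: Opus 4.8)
The plan is to imitate the iterative separator-removal argument behind Lemma~\ref{l:out-deg}, but to replace its $\log n$ rounds by only $O(\log(1/c))$ rounds, which is what becomes available once the target connectivity is a constant fraction of $n$. First, two reductions. Set $k=\lceil cn\rceil$, so that proving $\kappa(U(D'))\ge k$ yields $\kappa(U(D'))\ge cn$; note $k-1<cn\le k$. We may further assume $3c\log(1/c)<1$, that is, that $c$ is below an absolute constant (so $\log(1/c)$ is as large as we need): otherwise the hypothesis demands out-degree at least $3c\log(1/c)\,n\ge n$, impossible for a simple digraph on at most $n$ vertices, and the lemma holds vacuously.

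Now run the following process from $D_0=D$. Given $D_i$ on $m_i$ vertices, stop and output $D'=D_i$ if $U(D_i)$ is $k$-connected. Otherwise choose a set $S_i$ of at most $k-1$ vertices whose deletion leaves $U(D_i)$ with at least two components (if $m_i\le k+1$ and $U(D_i)$ is not complete, let $S_i$ be all but two non-adjacent vertices), split the components of $U(D_i)-S_i$ into two nonempty parts $A_i,B_i$ with $|A_i|\le(m_i-|S_i|)/2\le m_i/2$ by the usual greedy grouping of component sizes, and set $D_{i+1}=D_i[A_i]$. The only way this cannot proceed at a non-$k$-connected $D_i$ is when $U(D_i)$ is a complete graph on at most $k$ vertices; call that the \emph{stuck} outcome.

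Two invariants drive the conclusion. Since $D_i$ has no arc between $A_i$ and $B_i$, each vertex surviving from $D_i$ to $D_{i+1}$ loses at most $|S_i|\le k-1$ out-neighbours, so $d^+_{D_i}(v)\ge d^+_D(v)-i(k-1)\ge d^+_D(v)-i\,cn$ for all $v\in V(D_i)$; and $m_i\le n/2^i$. The stuck outcome forces every vertex of $D_i$ to have out-degree at most $k-1$, so $3c\log(1/c)n-i(k-1)\le k-1$, hence $i>3\log(1/c)-1$; consequently no round up to $I_0:=\lfloor 2\log(1/c)\rfloor$ is stuck. On the other hand, if $U(D_{I_0})$ were still not $k$-connected, then every vertex of $D_{I_0}$ would have out-degree at most $m_{I_0}-1<2c^2n$ but also at least $3c\log(1/c)n-I_0(k-1)>cn\log(1/c)$, forcing the false inequality $\log(1/c)<2c$. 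Hence the process halts with a $k$-connected $D'=D_{i^*}$ for some $i^*\le I_0\le 2\log(1/c)$, and then $d^+_{D'}(v)\ge d^+_D(v)-i^*(k-1)\ge d^+_D(v)-2c\log(1/c)n$ for every $v\in V(D')$, which is the claim.

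The only real friction I anticipate is the bookkeeping in the third paragraph: one must verify that neither the stuck outcome (which cannot occur before roughly $3\log(1/c)$ rounds) nor persistent non-$k$-connectivity (ruled out at round $\approx 2\log(1/c)$) can happen too early, and that the accumulated loss $i^*(k-1)$ stays below $2c\log(1/c)n$. All of this reduces to $\log(1/c)$ dominating bounded quantities for small $c$, but the constants $3$ and $2$ in the statement are precisely what make the window $(2\log(1/c),\,3\log(1/c))$ available for the argument to close.
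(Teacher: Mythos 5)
Your proposal is correct and uses essentially the same mechanism as the paper's proof: iteratively remove a small separating set, pass to a side of the cut of at most half the size, and track that after $i$ rounds the surviving component has at most $n/2^i$ vertices while each vertex has lost at most $i\,cn$ from its original out-degree; the two bounds collide after roughly $O(\log(1/c))$ rounds, so the process must halt earlier with a $\lceil cn\rceil$-connected piece. You run the process for $I_0=\lfloor 2\log(1/c)\rfloor$ rounds, whereas the paper uses $r=\lceil \gamma/2c\rceil=\lceil 1.5\log(1/c)\rceil$ and verifies $\gamma-rc\ge 2^{-r}$ directly, but the bookkeeping is equivalent and both land within the claimed $2c\log(1/c)n$ degree loss. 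One small difference: you make the "stuck on a small complete graph" case explicit and dispose of it separately (showing it would require $i>3\log(1/c)-1$), while the paper folds this into a single size-versus-out-degree contradiction at step $r$ without naming the case; your version is arguably more transparent but slightly more verbose.
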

\begin{proof}
	Let $\gamma = 3c\log(1/c)$. If $\gamma \ge 1$ or if $\kappa(U(D)) \ge cn$, there is nothing to prove.
	So, assume that $\gamma < 1$ and that $\kappa(U(D)) < cn$. Delete a separating set of size at most $cn$. The smallest component, denoted by $A$, has fewer than $n/2$ vertices and for any $v \in V(A)$, every
	out-neighbor of $v$ is either in $V(A)$ or in the separating set that we removed.
	Hence, $d^+_{A}(v) \ge d^+_D(v) - cn$. We repeatedly apply this step, and note that this process must terminate. However, after step $r=\lceil \gamma/2c \rceil$, we are left with a component which
	consists of fewer than $n/2^r$ vertices where each of these vertices has out-degree at least
	$\gamma n - rcn$. However, this is impossible since
	$$
	  \gamma - rc = \gamma - c \left\lceil \frac{\gamma}{2c} \right\rceil \ge \frac{\gamma}{2}-c \ge \frac{\gamma}{6} \ge
	  c^{3/2} = 2^{1.5\log c} = 2^{-\frac{\gamma}{2c}} \ge 2^{-r}\;.
	$$
	Thus, the process ends after at most $r-1$ steps, finding the desired $D'$ and we have
	$d^+_{D'}(v)  \ge d^+_D(v) - (r-1)cn \ge d^+_D(v) - (\gamma/2)n \ge d^+_D(v) - 2c\log(1/c) n$
	for all $v \in V(D')$.
\end{proof}
\begin{proof}[Proof of Theorem \ref{t:main}, Case (c)]
	Recall that we must prove that for all $0 < c < \frac{1}{2}$ and sufficiently large $n$, if $k = \lfloor cn \rfloor$, then $f(k, n) \le 30\sqrt{c}n$.
	The proof is similar to Case (b). We use the same notation and outline the differences.
	Let $c^*=30\sqrt{c}$ and observe that Case (c) holds vacuously when $c \ge 1/900$, hence we assume that $c < 1/900$. let $k=\lfloor cn \rfloor$, $s=\lfloor c^*n \rfloor$, $d=\lfloor s/4 \rfloor$. We now have
	the number of non-singleton parts is $t^* \le n/d \le 4n/(s-3) < 5/c^*$.
	
	If $V_i  \in P$ is not a singleton we have, as in Case (b), that
	$$
	|S_i| \ge d-(2k-2)t^* \ge n(c^*/5 - 10c/c^*) \ge n\left( 6\sqrt{c}-\sqrt{c}/3 \right) \ge 5\sqrt{c}n\;.
	$$
	Consider now a singleton part $V_i$ and recall that if $V_i$ is of type $\alpha$, then $S_i$ is a star
	connecting $V_i$ only to singleton parts and $|S_i|=3d$.
	We again claim that there are no singletons of type $\beta$. 
	Suppose that $V_i$ is of type $\beta$, i.e., at least $s-3d$ edges connect it
	to non-singleton parts. However, by Lemma \ref{l:union-singleton} at most $(2k-2)t^*$ edges connect it to non-singleton parts, which is impossible since
	$(2k-2)t^* < n(10c/c^*) < n(c^*/5) < d \le s-3d$.
	
	As in Case (b), since $|S_i|/8 = \omega( \ln n)$, we may assume that $|T_i| \ge \frac{|S_i|}{4}$ holds for all $1 \le i \le t$. Thus, we have that in the directed graph $D$, the out-degree of $i$ 
	is
	$$
	|T_i| \ge \frac{|S_i|}{4} \ge \frac{5}{4}\sqrt{c}n \ge 3c\log(1/c)n
	$$
	where in the last inequality we have used that $c < 1/900$. We therefore apply Lemma \ref{l:out-deg-linear} to obtain $D'$ with  with $\kappa(U(D')) \ge cn$
	and where $d^+_{D'}(i)  \ge |T_i| -2c\log(1/c) n$ for all $i \in V(D')$. The analogue of \eqref{e:2}, which, recall, is only applied when $V_i$ is of type $\alpha$, now becomes
	$$
	|T_i^*| = d^+_{D'}(i) \ge |T_i| -2c\log(1/c) n \ge \frac{3d}{4}-n\sqrt{c} \ge \frac{3d}{4} - nc^*/30 \ge \frac{3d}{4}-\frac{d}{4} \ge \frac{d}{2}
	$$
	so, as in Case (a), $|V(B)| \ge d$, the new partition has fewer parts than $P$, and $B$ is $k$-connected, contradicting the minimality of $t$.	
\end{proof}

\section{Concluding remarks}\label{sec:concluding}

As mentioned in the introduction, it is not difficult to prove that $f(2,n)=3$ for all $n \ge 5$
($f(2,4)=2$) but as we have not found a proof in the literature, we present one.
\begin{proposition}\label{prop:1}
	$f(2,n) = 3$ for all $n \ge 5$.
\end{proposition}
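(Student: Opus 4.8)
The plan is to prove the two directions separately. For the lower bound $f(2,n)\ge 3$, I would exhibit, for each $n\ge 5$, a $2$-connected $n$-vertex graph with no spanning bipartite $2$-connected subgraph. The natural candidate is an odd wheel or a small gadget: a triangle with pendant structure forces any spanning bipartite subgraph to break one edge of the triangle, and if the construction is arranged so that the triangle is a ``bottleneck'' whose removal of one vertex disconnects the rest, then after deleting the offending edge the graph is no longer $2$-connected. Concretely I would take $K_4$ minus a perfect matching type constructions, or simpler: two triangles sharing a vertex, padded up to $n$ vertices along a path attached appropriately; one checks this is $2$-connected but every spanning bipartite subgraph has a cut vertex. (The parenthetical remark $f(2,4)=2$ tells us the construction genuinely needs $n\ge 5$, so the gadget should not exist inside $K_4$.)

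For the upper bound $f(2,n)\le 3$, I would let $G$ be any $3$-connected graph on $n\ge 5$ vertices and produce a spanning bipartite $2$-connected subgraph. The cleanest route mirrors the paper's own method but is light enough to do by hand: build up a spanning bipartite $2$-connected subgraph greedily using Lemmas~\ref{l:union} and \ref{l:union-singleton} with $k=2$, so the relevant thresholds are $2k-1=3$ edges / neighbors. Start from any cycle of $G$ of even length if one exists (a $2$-connected bipartite subgraph), or from a short bipartite $2$-connected piece, and repeatedly absorb vertices: since $G$ is $3$-connected every remaining vertex has degree $\ge 3$, and a Menger/fan argument gives $3$ disjoint connections between any current piece and the rest, so Lemma~\ref{l:union} (resp.\ Lemma~\ref{l:union-singleton}) lets us grow the bipartite $2$-connected subgraph by at least one vertex each time, terminating at a spanning one. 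Alternatively, and perhaps more elegantly, one uses an ear-decomposition of $G$ (every $2$-connected graph, a fortiori $3$-connected, has an open ear decomposition) and processes ears one at a time, at each step deleting at most one edge from an odd ear while $3$-connectivity guarantees enough slack that $2$-connectivity of the accumulated bipartite subgraph is preserved — this is essentially the ``maximum cut of a $3$-edge-connected graph is $2$-edge-connected'' idea of Thomassen, upgraded from edge- to vertex-connectivity using the stronger hypothesis.

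The main obstacle I anticipate is the upper bound, specifically the bookkeeping that deleting edges to fix parity never creates a cut vertex. The delicate case is a short odd ear or a triangle attached at two vertices $u,v$ of the current piece $H$: we must remove one of its three edges, and we need the resulting graph to remain $2$-connected, which requires knowing that $u$ and $v$ are not ``too close to cut vertices'' in $H$ — here is precisely where $3$-connectivity of $G$ (rather than mere $2$-connectivity) is used, to ensure a third internally disjoint $u$–$v$ connection survives. I would handle this by maintaining the invariant that the partial bipartite subgraph is not just $2$-connected but has the property that every vertex has degree $\ge 2$ with room to spare wherever an ear can attach, and checking the finitely many local configurations. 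The lower-bound construction, by contrast, should be a short explicit check once the right gadget is chosen.
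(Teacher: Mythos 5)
There is a genuine gap on both sides, and the sketch would need substantial repair.

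\textbf{Lower bound.} None of the candidate gadgets you list actually works. An odd wheel is $3$-connected, so it cannot witness $f(2,n)\ge 3$. The graph $K_4$ minus a perfect matching is $C_4$, which is bipartite and $2$-connected and therefore its own spanning bipartite $2$-connected subgraph. Two triangles sharing a vertex is not even $2$-connected (the shared vertex is a cut vertex), and ``padded along a path'' is undefined and generally destroys $2$-connectivity or reintroduces a bipartite $2$-connected spanning subgraph. The construction you want is far simpler: for odd $n\ge 5$ take the cycle $C_n$; the only spanning $2$-connected subgraph of a cycle is the cycle itself, which is odd and hence not bipartite. For even $n\ge 6$, take the odd cycle $C_{n-1}$ together with one additional vertex joined to two non-adjacent cycle vertices; a degree count shows every spanning $2$-connected subgraph must keep all edges, and the graph contains an odd cycle, so it is not bipartite.

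\textbf{Upper bound.} The greedy scheme as stated does not go through. Lemma~\ref{l:union-singleton} with $k=2$ needs a single vertex outside the current piece $H$ having at least $3$ neighbors \emph{inside} $H$, and Lemma~\ref{l:union} needs a second vertex-disjoint $2$-connected bipartite piece with a $3$-edge matching to $H$. A Menger fan from $H$ to the rest gives $3$ internally disjoint paths, but their first steps may land on $3$ different outside vertices (so Lemma~\ref{l:union-singleton} does not apply), and the outside portion need not contain any $2$-connected bipartite subgraph to feed into Lemma~\ref{l:union}. The paper avoids this by not growing greedily: it fixes $G^*$ to be a \emph{maximum-order} $2$-connected bipartite subgraph, and then the maximality of $G^*$ yields structural restrictions on how a component $X$ of $G-V(G^*)$ can attach (no vertex of $X$ has two neighbors in the same side of $G^*$; strong parity restrictions via even/odd paths in a spanning tree of $X$; no matching of size $3$ between $X$ and $G^*$), which together produce a separating set of size $2$, contradicting $3$-connectivity. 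Your alternative via open ear decompositions is a genuinely different route and is not implausible, but the step you flag yourself — showing that deleting one edge from an odd ear to restore bipartiteness never creates a cut vertex — is precisely the crux, and you leave it unresolved; without a worked-out invariant this is an outline, not a proof.
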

\begin{proof}
	The lower bound follows by considering a cycle of length $n$ when $n \ge 5$ is odd, and a cycle of length $n-1$
	with an additional vertex connected to two non-adjacent cycle vertices when $n \ge 6$ is even.
	These graphs are $2$-connected and have no $2$-connected spanning bipartite subgraphs.
	
	For the upper bound, suppose $G$ has $n$ vertices and is $3$-connected.
	Let $G^*$ be a subgraph of maximum order that is $2$-connected and bipartite.
	Notice that $G^*$ is nonempty since a $3$-connected graph has an even cycle.
	We claim that $|V(G^*)|=n$. Assuming the contrary, let $A$ and $B$ denote the two parts of the bipartition of $G^*$ and let $G'$ be the subgraph of $G$ induced by the vertices not in $G^*$.
	Note that it is possible that $G'$ is not connected (nor bipartite), so let $X$ be the set of vertices of some connected component of $G'$ and let $T$ be a spanning tree of $G[X]$. Let $C$ and $D$ denote
	the two parts of the bipartition of $T$.
	
	Any vertex $x \in X$ cannot have two neighbors in $A$ as otherwise we can add
	to $G^*$ the vertex $x$ and two edges connecting it to $A$, forming a larger $2$-connected bipartite subgraph. Similarly, we cannot have two neighbors of $x$ in $B$.
	Thus, every $x \in X$ has at most one neighbor in $A$ and at most one neighbor in $B$.
	In particular, $X$ is not a singleton, as otherwise there is a vertex of degree $2$ (namely $x$) in $G$
	which is impossible as $G$ is $3$-connected.
	
	Suppose that $x$ has a neighbor $a \in A$ and a neighbor $b \in B$.
	We claim that (i) for any $y \in X$ with $y \neq x$ such that both $x$ and $y$ are in the same part
	of the bipartition of $T$ (either both in $C$ or both in $D$), $y$ cannot have any neighbor in
	$(A \cup B) \setminus \{a,b\}$.
	Indeed suppose $y$ has such a neighbor, say $a' \in A$.
	Then we can add to $G^*$ the unique (even length) path of $T$ connecting $x$ and $y$, together with the edges $xa$ and $ya'$, forming a larger $2$-connected bipartite subgraph.
	Similarly, we claim that (ii) for any $y \in X$ such that $x$ and $y$ are in opposite parts (one in $C$ and one in $D$), $y$ cannot have any neighbor in $(A \cup B) \setminus \{a,b\}$.
	Indeed suppose $y$ has such a neighbor, say $b' \in B$.
	Then we can add to $G^*$ the unique (odd length) path of $T$ connecting $x$ and $y$, together with the edges $xa$ and $yb'$, forming a larger $2$-connected bipartite subgraph.
	
	We also claim (iii) that we cannot have a matching of size three between $X$ and $G^*$ (this is similar to the proof of Lemma \ref{l:union}). To see this, suppose that $x_iv_i$ for $i=1,2,3$ form a matching of size  three between $X$ and $G^*$
	where $v_1,v_2,v_3 \in V(G^*)$ and $x_1,x_2,x_3 \in X$.
	If $v_1,v_2,v_3$ are in the same part of $G^*$ (either all in $A$ or all in $B$), then
	w.l.o.g.\ $x_1$ and $x_2$ are in the same part of $T$, so there is a path of even length in 
	$T$ connecting $x_1$ and $x_2$, and we may add that path
	and the edges $x_1v_1$, $x_2v_2$ to $G^*$ and obtain a larger $2$-connected bipartite graph.
	Otherwise, assume $v_1,v_2$ are in the same part of $G^*$ and $v_3$ is in the other part of $G^*$.
	If $x_1$ and $x_2$ are in the same part of $T$, then the same argument follows.
	Otherwise, $x_1$ and $x_2$ are in different parts of $T$.
	W.l.o.g.\ $x_3$ and $x_2$ are in distinct parts of $T$ and recall that $v_2$ and $v_3$ are in distinct parts of $G^*$. Then we can add the odd length path of $T$ between $x_2$ and $x_3$ 
	and the edges $v_2x_2$, $v_3x_3$ to $G^*$ and obtain a larger $2$-connected bipartite graph.
	
	Having shown (i)-(iii), we claim that we can disconnect $G$ by removing two vertices, which is a contradiction. Take a maximum matching $M$ between $X$ and $A \cup B$. If $M$ consists of one edge, we can
	disconnect $G$ by removing the endpoints of that single edge. Hence, by (iii), it consists of precisely two edges, say $M=\{xv_1,yv_2\}$ where $x,y \in X$.
	If $v_1$ is the only neighbor of $X$ in $A \cup B$ and $v_2$ is the only neighbor of $y$ in $A \cup B$,
	then we can remove $v_1,v_2$ from $G$ thereby separating $X$ from the rest of the graph (recall that
	$M$ is a maximum matching). Otherwise, w.l.o.g., $x$ has two neighbors in $A \cup B$, one of which is
	$v_1$ and the other denoted by $v_3$ (possibly $v_3=v_2$). If $y$ also has two neighbors in $A \cup B$,
	then by (i) and (ii) it must be that these neighbors are also $v_1$ and $v_2=v_3$.
	We can then remove $v_1,v_2$ from $G$ and disconnect $X$. Otherwise,
	$v_2$ is the only neighbor of $y$ in $A \cup B$. Now, if $v_2=v_3$ we can delete $v_1$ and $v_2$ from $G$ thereby separating $X$ from the rest of the graph. Otherwise, we can delete $x$ and $v_2$ thereby separating $y$ from the rest of the graph.
\end{proof}

Recall that if $G$ is $2k-1$ edge-connected, then every maximum edge cut
is $k$ edge-connected. It is easy to generalize this observation and show that if
the edge connectivity of $G$ is larger than $r(k-1)/(r-1)$, then $G$ has a spanning $r$-chromatic
subgraph that is $k$ edge-connected. Indeed, any maximum $r$-cut validates this fact.
Stated otherwise, by allowing more colors for the spanning subgraph, we can keep the edge-connectivity
close to its original value. The following proposition shows that the same holds for
the case of $k$-connectivity in the linear regime.
\begin{proposition}\label{p:random}
	Let $0 < c < c'<1$. There is a constant $r=r(c,c')$ such that for all $n$ sufficiently large, if
	$G$ has $n$ vertices and is  $\lfloor c'n \rfloor$-connected, then $G$ has an $r$-colorable spanning subgraph that is $\lfloor cn \rfloor$-connected.
\end{proposition}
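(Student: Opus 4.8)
The plan is probabilistic: take a uniformly random colouring of $V(G)$ with $r$ colours, for a constant $r=r(c,c')$ to be chosen, and let $H$ be the spanning subgraph consisting of the \emph{properly coloured} edges, i.e.\ those whose two endpoints received different colours. Then the colour classes are independent sets of $H$, so $H$ is $r$-partite and in particular $r$-colourable; it remains to show that $H$ is $\lfloor cn\rfloor$-connected with positive probability. The key feature of this construction is that an edge of $G$ is absent from $H$ only if it is monochromatic --- an event of probability $1/r$ --- and that for edges with no common endpoint these events are mutually independent.

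First I would argue by contradiction inside the probability space: if $H$ is not $\lfloor cn\rfloor$-connected, there is a set $S$ with $|S|\le\lfloor cn\rfloor-1$ and a partition of $V\setminus S$ into nonempty parts $A,B$ with $|A|\le|B|$ such that no edge of $H$ joins $A$ to $B$; equivalently, every edge of $G$ between $A$ and $B$ is monochromatic. Since $|S|<\lfloor c'n\rfloor$, the graph $G-S$ is $q$-connected with $q:=\lfloor c'n\rfloor-|S|\ge\lfloor c'n\rfloor-\lfloor cn\rfloor+1=\Theta(n)$, a fact I will use to produce many crossing edges.

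The argument then splits according to the size of the smaller side $A$, and this split is the crux of the proof. Fix a threshold $\ell=\lceil(c'-c)n/2\rceil$. If $|A|\ge\ell$, then $|B|\ge\ell$ as well and $q\ge\ell$ for large $n$, so Menger's theorem yields $\min(|A|,|B|,q)\ge\ell$ pairwise vertex-disjoint $A$-$B$ paths in $G-S$; taking one crossing edge from each path gives a matching $M$ of size $\ge\ell$ between $A$ and $B$ in $G$. All edges of $M$ are monochromatic, an event of probability $(1/r)^{|M|}\le(1/r)^{\ell}$ by independence (the edges of $M$ are vertex-disjoint). Since there are at most $3^{n}$ triples $(A,B,S)$, a union bound shows the probability that such a configuration exists is at most $\bigl(3\,r^{-(c'-c)/2}\bigr)^{n}$, which tends to $0$ once $r>9^{1/(c'-c)}$. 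If instead $|A|<\ell$, pick any $a\in A$: it has at least $\lfloor c'n\rfloor-|S|-(|A|-1)\ge(c'-c)n/2-O(1)$ neighbours in $B$, all of them monochromatic with $a$, so the colouring-only event ``some vertex has at least $(c'-c)n/2-O(1)$ neighbours of its own colour'' occurs; its probability is at most $n\binom{n}{m}(1/r)^{m}$ with $m=\Theta(n)$, which is $o(1)$ for $r$ a sufficiently large constant depending on $c'-c$. The point of the case division is precisely that the clean ``disjoint crossing matching $+$ independence $+$ $3^{n}$ union bound'' only gives a useful (linear-size) matching when $A$ itself has linear size; for a sublinear $A$ a union bound over all separators is hopelessly lossy, and one instead exploits that a single vertex of $A$ already has linearly many forced-monochromatic neighbours --- an event that depends only on the colouring and so needs only a union bound over the $n$ vertices.

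Putting the two estimates together, for $r=r(c,c')$ large enough both failure probabilities are $o(1)$, so with probability tending to $1$ the graph $H$ is $\lfloor cn\rfloor$-connected (it trivially has more than $\lfloor cn\rfloor$ vertices), proving the proposition. The main obstacle is exactly the bookkeeping needed to make both regimes go through with the \emph{same} constant $r$ and the same threshold $\ell$; once that is set up, everything else is a routine Chernoff/union-bound computation together with an invocation of Menger's theorem.
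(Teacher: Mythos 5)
Your proposal is correct, and it takes a genuinely different route from the paper's proof, though both are Menger-plus-probability arguments. The paper fixes a single pair of vertices $a,b$, takes the $\lfloor c'n\rfloor - 1$ internally disjoint $a$--$b$ paths guaranteed by Menger, lower-bounds the expected number of paths that survive the random colouring via AM--GM (handling the varying path lengths), and finishes with Chernoff plus a union bound over the $O(n^2)$ pairs. You instead argue by contradiction on the existence of a bad triple $(A,B,S)$ in the coloured graph $H$, and split on the size of the smaller side $A$. Your case split is the real novelty relative to the paper: when $|A|$ is linear you extract a linear crossing matching from $G-S$ via Menger applied to the sets $A,B$, exploit independence of monochromaticity on a matching, and pay a $3^n$ union bound over triples, which is affordable precisely because the matching is linear; when $|A|$ is sublinear the $3^n$ union bound would be hopeless, but then a single vertex of $A$ already has linearly many forced-monochromatic neighbours, and the union bound is only over $n$ vertices. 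The paper's route avoids the exponential union bound entirely and avoids a case split at the cost of the AM--GM step; your route avoids the AM--GM step and the path-length bookkeeping at the cost of the case split and the $3^n$ enumeration. Both yield a constant $r$ depending only on $c'-c$, and your explicit bounds ($r > 9^{1/(c'-c)}$ from the large-$A$ case, a similar bound from the small-$A$ case) are of the same quality as what the paper's method would give if one unwound the constants.
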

\begin{proof}
	Throughout the proof we assume that $cn$ and $c'n$ are integers as this does not affect the
	correctness of the proposition. Let $t=c'n$ and let $G$ be a $t$-connected graph on $n$ vertices.
	Let $r=r(c,c') \ge 3$ be a constant whose existence is shown later. Consider an $r$-coloring of $V(G)$ where each vertex independently chooses its color uniformly at random. Let $G'$ be the
	$r$-colorable spanning subgraph of $G$ consisting of all edges whose endpoints receive distinct colors.
	We will prove that with positive probability, $G'$ is $cn$-connected.
	
	By Menger's Theorem, for any two distinct vertices $a,b \in V(G)$, there is a set ${\cal P}_{a,b}$ of $t-1$
	paths connecting $a$ and $b$, each of length at least two, such that any two paths in ${\cal P}_{a,b}$ are internally vertex-disjoint.
	We say that a path $P \in {\cal P}_{a,b}$ {\em survives} if $P$ is present in $G'$.
	It suffices to prove that the probability that fewer than $cn$ paths of ${\cal P}_{a,b}$ survive is less than $1/n^2$ as then, by the union bound and Menger's theorem, $G'$ is $cn$-connected with positive probability.
	
	Fix two vertices $a$ and $b$ and fix a coloring of these two vertices (it is possible for $a$ and $b$ to have the same color).
	Consider some path $P \in {\cal P}_{a,b}$ and let $\ell=|E(P)|-1$ denote the number of its internal vertices.
	The number of possible colorings of these internal vertices is
	$r^\ell$ and at least $(r-1)^{\ell-1}(r-2) \ge (r-2)^\ell$ of these colorings yield a surviving path.
	Hence, the probability that $P$ survives is at least $(1-2/r)^\ell$.
	If $X_P$ denotes the indicator variable for survival and $X = \sum_{P \in {\cal P}_{a,b}} X_P$ is the number of surviving paths,
	$$
	{\mathbb E}[X] = \sum_{P \in {\cal P}_{a,b}} {\mathbb E}[X_P] \ge
	\sum_{P \in {\cal P}_{a,b}} \left(1-\frac{2}{r}\right)^{|E(P)|-1}\;.
	$$
	The total number of internal vertices in all paths of $P \in {\cal P}_{a,b}$ is at most $n-2$, so
	the average number of internal vertices is at most $(n-2)/(t-1)$. By the last inequality and the AM-GM inequality we have
	$$
	{\mathbb E}[X] \ge (t-1)\left(1-\frac{2}{r}\right)^{(n-2)/(t-1)}\;.
	$$
	Let $c^*=(c+c')/2$. Recalling that $t=c'n$ we have from the last inequality, that for all
	$r$ sufficiently large as a function of $c'$ and $c^*$,
	$$
	{\mathbb E}[X] \ge c^*n\;.
	$$
	Notice, however, that $X$ is the sum of $t-1$ independent indicator variables, as the paths are internally vertex-disjoint. Hence, by Chernoff's inequality (in particular, Corollary A.1.7 of \cite{AS-2004})
	$$
	\Pr[ X - {\mathbb E}[X] < (c-c^*)n ] < e^{-2(c^*-c)^2n^2/t} < \frac{1}{n^2}
	$$
	implying that $\Pr[X < cn] < 1/n^2$. As the last statement holds for any choice of the colors of $a$ and $b$, we have that the probability that the number of surviving paths in ${\cal P}_{a,b}$ (regardless of the colors of $a$ and $b$) is smaller than $cn$ is less than $1/n^2$.
\end{proof}

\section*{Acknowledgment}
I thank both reviewers for insightful suggestions.


\begin{thebibliography}{10}
	
	\bibitem{AS-2004}
	N.~Alon and J.~Spencer.
	\newblock {\em The probabilistic method}.
	\newblock John Wiley \& Sons, 2004.
	
	\bibitem{BK-2016}
	A.~Bernshteyn and A.~Kostochka.
	\newblock On the number of edges in a graph with no $(k+1)$-connected
	subgraphs.
	\newblock {\em Discrete Mathematics}, 339(2):682--688, 2016.
	
	\bibitem{CFS-2016}
	D.~Conlon, J.~Fox, and B.~Sudakov.
	\newblock Short proofs of some extremal results {II}.
	\newblock {\em Journal of Combinatorial Theory, Series B}, 121:173--196, 2016.
	
	\bibitem{DF-2015}
	M.~Delcourt and A.~Ferber.
	\newblock On a conjecture of {T}homassen.
	\newblock {\em Electronic Journal of Combinatorics}, 22(3):P3.2, 2015.
	
	\bibitem{erdos-1968}
	P.~Erd\H{o}s.
	\newblock Problems, {T}heory of {G}raphs.
	\newblock In {\em Proc. Colloq., Tihany, 1966}, pages 361--362. Academic Press,
	New York, 1968.
	
	\bibitem{FKP-2015}
	F.~Foucaud, M.~Krivelevich, and G.~Perarnau.
	\newblock Large subgraphs without short cycles.
	\newblock {\em SIAM Journal on Discrete Mathematics}, 29(1):65--78, 2015.
	
	\bibitem{KO-2004}
	D.~K{\"u}hn and D.~Osthus.
	\newblock Every graph of sufficiently large average degree contains a
	{$C_4$}-free subgraph of large average degree.
	\newblock {\em Combinatorica}, 24(1):155--162, 2004.
	
	\bibitem{mader-1972}
	W.~Mader.
	\newblock Existenzn-fach zusammenh{\"a}ngender teilgraphen in graphen
	gen{\"u}gend gro{\ss}er kantendichte.
	\newblock In {\em Abhandlungen aus dem Mathematischen Seminar der
		Universit{\"a}t Hamburg}, volume~37, pages 86--97. Springer, 1972.
	
	\bibitem{mader-1979}
	W.~Mader.
	\newblock Connectivity and edge-connectivity in finite graphs.
	\newblock In B.~Bollob\'as, editor, {\em Surveys in Combinatorics (Proceedings
		of the Seventh British Combinatorial Conference), London Mathematical Society
		Lecture Note Series}, volume~38, pages 66--95, 1979.
	
	\bibitem{PR-2017}
	G.~Perarnau and B.~Reed.
	\newblock Existence of spanning {${\mathcal F}$}-free subgraphs with large
	minimum degree.
	\newblock {\em Combinatorics, Probability and Computing}, 26(3):448--467, 2017.
	
	\bibitem{thomassen-1983}
	C.~Thomassen.
	\newblock Girth in graphs.
	\newblock {\em Journal of Combinatorial Theory, Series B}, 35(2):129--141,
	1983.
	
	\bibitem{thomassen-1989}
	C.~Thomassen.
	\newblock Configurations in graphs of large minimum degree, connectivity, or
	chromatic number.
	\newblock {\em Annals of the New York Academy of Sciences}, 555(1):402--412,
	1989.
	
	\bibitem{yuster-2003}
	R.~Yuster.
	\newblock A note on graphs without $k$-connected subgraphs.
	\newblock {\em Ars Combinatoria}, 67:231--236, 2003.
	
\end{thebibliography}
\end{document}